\newcommand{\x}{\mathbf{x}}
\newcommand{\be}{\begin{equation}}
\newcommand{\ee}{\end{equation}}
\newcommand{\ba}{\begin{array}}
\newcommand{\ea}{\end{array}}
\newtheorem{theorem}{Theorem}
\newtheorem{remark}{Remark}
\journal{Journal of Computational and Applied
	Mathematics}
\begin{document}
%\linenumbers
\begin{frontmatter}

\title{A high-order multi-time-step scheme for bond-based peridynamics}

\author{ Chenguang Liu$^a$, Jie Sun $^a$, Hao Tian$^{a*}$,  WaiSun Don $^a$, Lili Ju $^b$}

\address{
$^a$ School of Mathematical Science, Ocean University of China, Qingdao, Shandong 266100, China \\
$^b$Department of Mathematics, University of South Carolina, Columbia, South Carolina 29208, USA\\
 }

\begin{abstract}
A high-order multi-time-step (MTS) scheme for the bond-based peridynamic (PD) model, an extension of classical continuous mechanics widely used for analyzing discontinuous problems like cracks, is proposed. The MTS scheme discretizes the spatial domain with a meshfree method and advances in time with a high-order Runge-Kutta method. To effectively handle discontinuities (cracks) that appear in a local subdomain in the solution, the scheme employs the Taylor expansion and Lagrange interpolation polynomials with a finer time step size, that is, coarse and fine time step sizes for smooth and discontinuous subdomains, respectively, to achieve accurate and efficient simulations. By eliminating unnecessary fine-scale resolution imposed on the entire domain, the MTS scheme outperforms the standard PD scheme by significantly reducing computational costs, particularly for problems with discontinuous solutions, as demonstrated by comprehensive theoretical analysis and numerical experiments.
\end{abstract}

\begin{keyword}
Bond-based peridynamics; multi-time-step; higher-order method; crack propagation
\end{keyword}

\end{frontmatter}

\section{Introduction}
Classical continuum mechanics relies on partial differential equations to study movement systems; however, it has limitations in handling problems with discontinuities. To overcome this deficiency, Silling \cite{SA} developed the peridynamic approach, which utilizes nonlocal integral equations. Initially, peridynamic models were bond-based \cite{AXS}, requiring materials to have fixed Poisson's ratios. Later, state-based models \cite{SA2} were introduced, eliminating the limitation of Poisson's ratio. The versatility of peridynamic models has been demonstrated in various practical applications, such as deformation analysis of composite materials \cite{GRUA, YNE, VB}, investigation of corrosion phenomena \cite{SZF, ZSJ, JSM, SZS, SZJF}, prediction of damage \cite{CSE, XXXE2, DW, MKA, PZ}, and simulation of crack propagation in diverse materials \cite{SMY, YXG, SE, SYL, PS}.

Various spatial discretization approaches have been employed in peridynamics, including meshfree, collocation, finite element, and finite difference methods \cite{XQ, TCP,JY, JMY, SY}. Researchers have also explored algorithms based on fast Fourier transform \cite{SF, SL} and coupling methods \cite{MT, TM, JFZJ, FGY, HHHY, MPRM} to alleviate the computational burden. For temporal discretization, adaptive dynamic relaxation (ADR) and velocity Verlet (VV) methods are commonly used methods for solving quasi-static and dynamic problems, respectively, providing accurate and efficient solutions in peridynamics.

However, when dealing with large and complex nonlocal problems, the computational burden of the aforementioned numerical methods can be significant, especially when using a small step to achieve temporal accuracy. As a solution, local-time-stepping schemes have gained popularity in recent years. One notable example is the local-time-stepping (LTS) scheme \cite{TD}, which builds upon the strong second-order stability preserving Runge-Kutta (SSP-RK) method for shallow-water equations. Similarly, the explicit local-time-stepping scheme \cite{KL} employs higher-order Runge-Kutta methods to provide greater temporal accuracy for solving multi-dimensional systems of conservation laws. Another approach is multi-time-step integration, based on the velocity Verlet method, improving computational efficiency by utilizing variable time step sizes for different components \cite{LP}. However, several outstanding issues still need to be addressed.
\begin{enumerate}
	\item Previous studies on local-time-stepping schemes have mainly focused on nonlocal models other than PD.
	\item Only the first- or second-order temporal discretization schemes were introduced for PD.
\end{enumerate}

%The numerical methods developed for peridynamics have significantly advanced its computational capabilities. However, when tackling large and complex nonlocal problems, these methods impose a considerable computational burden, mainly when using a small time step to achieve temporal accuracy. To mitigate this challenge, a class of MTS schemes has gained popularity within the field. One notable example is the local-time-stepping (LTS) scheme \cite{TD}, which builds upon the strong second-order stability preserving Runge-Kutta (SSP-RK) method. The SSP-RK method allows for adapting variable time step sizes at different grid locations, enhancing efficiency while maintaining stability.
%Similarly, the explicit local-time-stepping scheme \cite{KL} relies on higher-order Runge-Kutta (RK) methods, providing greater temporal accuracy. Another approach is an MTS integration \cite{LP} based on the velocity Verlet (VV) method, which improves the computational efficiency by utilizing variable time step sizes for various components of the peridynamic system. However, despite these advancements, unresolved issues are still associated with high-order MTS schemes for peridynamics. Further research and development are needed to address these challenges and refine the effectiveness and reliability of these schemes in the context of peridynamics simulations.

This article presents a multi-time-step scheme (MTS) for peridynamics based on high-order Runge-Kutta methods. The proposed approach extends the local-time-stepping solution to the peridynamics model, adapting the fine time step size near the crack, reducing computational costs, and ensuring accuracy. By incorporating higher-order time-stepping schemes, the proposed method achieves comparable accuracy to lower-order time-stepping schemes, such as adaptive dynamic relaxation and velocity Verlet algorithms, in a shorter time, thus reducing the overall computational cost of peridynamics simulations. We validate the feasibility and performance of the proposed algorithm through extensive numerical examples, demonstrating its efficacy in solving peridynamics problems.

%This article introduces a novel MTS (MTS) scheme based on the high-order Runge-Kutta method for peridynamics. The proposed MTS scheme combines the advantages of coupling methods, which locally {\red encrypt(?)} the crack to reduce computational costs while ensuring accuracy. By incorporating higher-order time-stepping schemes, the MTS scheme achieves comparable computational accuracy to lower-order time-stepping schemes like adaptive dynamic relaxation (ADR) and velocity Verlet (VV) algorithms {\red (but in a shorter time ?)}, thus reducing the overall computational cost of peridynamics simulations.
%The feasibility and performance of the proposed algorithm are validated through extensive numerical examples, demonstrating its efficacy in solving peridynamics problems.

The article is organized as follows: Section 2 reviews the peridynamic model and the spatial discretization. Section 3 analyzes the MTS scheme based on the third- and fourth-order Runge-Kutta for the peridynamic model and provides proof of convergence order. Section 4 offers numerical examples demonstrating the MTS scheme's accuracy and convergence. Finally, Section 5 provides the conclusion.

\section{Fundamentals of bond-based peridynamics and its discretization}

This section provides a succinct overview of the linearized and nonlinearized versions of the bond-based peridynamics (PD) model, which are the main focuses of this study. Additionally, it briefly outlines the spatial discretization used to solve these models. The equation of motion for the bond-based peridynamics model is given below:
\begin{equation}\label{pd:e1}
\rho\dfrac{d^2\mathbf{u}}{dt^2}(\mathbf{x}_i,t)=\int_{\mathcal{H}_{\mathbf{x}}}  \mathbf{p}(\mathbf{u}(\mathbf{x}^{'}, t)-\mathbf{u}(\mathbf{x}, t), \mathbf{x}^{\prime}-\mathbf{x}) \mathrm{d} V_{\mathbf{x}^{\prime}}+\mathbf{b}(\mathbf{x}, t),
\end{equation}
where $\rho$ is the mass density, ${\mathcal{H}_{\mathbf{x}}}$ is the horizon which and usually taken as a disk or ball of radius $\delta$. $\mathbf{u}(\mathbf{x},t)$ and $\mathbf{b}(\mathbf{x},t)$ are the displacement vector and body force density fields, respectively. Let denote  $\bm{\xi}=\mathbf{x}^{\prime}-\mathbf{x}$ and  $\bm{\eta}=\mathbf{u}(\mathbf{x}^{\prime}, t)-\mathbf{u}(\mathbf{x}, t)$, then the pairwise (linear and nonlinear) force function $\mathbf{p}$ exerted by a particle $\mathbf{x}$ on another particle $\mathbf{x}^{\prime}$ can be expressed as follows:
\begin{itemize} 
\item linear form
 \begin{equation}\label{bond:e2}
    \mathbf{p}= \alpha\frac{\bm{\xi}\otimes\bm{\xi}}{\|\bm{\xi}\|^3}\bm{\eta},\\
\end{equation}
	\item nonlinear form
\begin{equation}\label{bond:e1}
\mathbf{p}=\alpha\frac{\|\bm{\xi}+\bm{\eta}\|-\|\bm{\xi}\|}{\|\bm{\xi}\|}\frac{\bm{\xi}+\bm{\eta}}{\|\bm{\xi}+\bm{\eta}\|}.    
\end{equation}  
\end{itemize}
Here $\alpha$ represents a scalar parameter introduced to ensure equivalence in energy between the peridynamic model and the classical elastic model,  as outlined below:
\begin{equation}\label{mu}
\alpha= \displaystyle{\begin{cases}\dfrac{9E}{\pi\delta^{3}d},& \text {2D},\vspace{0.5em}\\\dfrac{12E}{\pi\delta^{4}},&\text {3D},\\ \end{cases}}\hspace{0.5em} 
\end{equation}
where $E$ is the elastic modulus, $v$ is the Poisson's ratio, and $d$ is the thickness of the plate.

For discretizing \eqref{pd:e1} at a material point $\mathbf{x}_i$ for the linear bond-based peridynamic model, a meshfree method is employed for its simplicity in which the semi-discrete system can be rewritten as
\begin{equation}\label{mat:f1}
\rho\dfrac{d^2\mathbf{u}_i}{dt^2}=\sum_{\x_j\in\mathcal{H}_{x_i}} \alpha\frac{(\mathbf{x}_j-\mathbf{x}_i)\otimes(\mathbf{x}_j-\mathbf{x}_i)}{\|(\mathbf{x}_j-\mathbf{x}_i)\|^3}\left(\mathbf{u}_j-\mathbf{u}_i\right) V_{j}+\mathbf{b}(\mathbf{x}_i,t),
\end{equation} 
where $\mathbf{u}_i=\mathbf{u}(\mathbf{x}_i,t)$ and $V_j$ is the volume of the material point $\mathbf{x}_j$.

Similarly, for the nonlinear bond-based peridynamic model,  \eqref{pd:e1} can be discretized as
\begin{equation}\label{mat:f2}
\rho\dfrac{d^2\mathbf{u}_i}{dt^2}=\sum_{\x_j\in\mathcal{H}_{x_i}} 
\alpha\frac{\|\mathbf{x}_j+\mathbf{u}_j-\mathbf{x}_i+\mathbf{u}_i\|-\|\mathbf{x}_j-\mathbf{x}_i\|}{\|\mathbf{x}_j-\mathbf{x}_i\|}\frac{\mathbf{x}_j+\mathbf{u}_j-\mathbf{x}_i+\mathbf{u}_i}{\|\mathbf{x}_j+\mathbf{u}_j-\mathbf{x}_i+\mathbf{u}_i\|} V_{j}+\mathbf{b}(\mathbf{x}_i,t).
\end{equation}

By denoting $\mathbf{Q}=\{\mathbf{U}_j(t),\mathbf{x}_j\in\mathcal{H}_{\mathbf{x}_i}\}$, where $\mathbf{U}_i(t)=\left[\begin{matrix}\mathbf{u}(\mathbf{x}_i,t),\mathbf{v}(\mathbf{x}_i,t)\end{matrix}\right]^\mathrm{T}$, we can express the original semi-discrete system compactly as:
\begin{equation}\label{pd:dis1}
\dfrac{d{\mathbf{U}}_i}{dt}=\mathcal{L}_i\left(\mathbf{Q}\right),
\end{equation} 
where $\mathcal{L}$ is the spatial operator such that %the spatial operator $\mathcal{L}$ is associated with $\mathbf{U}$ and is defined by:
\begin{equation}\label{pd:dis2}
\mathcal{L}_i\left(\mathbf{Q}\right)=\left[\begin{matrix}
&\left(\sum\limits_{\x_j\in\mathcal{H}_{x_i}} \mathbf{p}(\mathbf{u}_j-\mathbf{u}_i, \mathbf{x}_j-\mathbf{x}_i) V_{j}+\mathbf{b}(\mathbf{x}_i,t)\right)/\rho\\
&\mathbf{v}(\mathbf{x}_i,t)
\end{matrix}\right],
\end{equation}
in which $\mathbf{v}=\dot{\mathbf{u}}$. 

\subsection{Temporal discretization schemes based on the Runge-Kutta methods}

This section presents a high-order temporal discretization scheme, without loss of generality and for clarity, using a uniformly-sized partition of the time interval $[0, T]$ into $N$ subintervals:
\begin{equation}
0=t_0\le t_1\le \cdots \le t_{N-1} \le t_N=T,\hspace{0.5em}\Delta t=t_{n+1} -t_{n},\hspace{0.5em} 0\leq n\leq N. 
\end{equation}

In order to discretize the system described by Eqs. \eqref{mat:f1} and \eqref{mat:f2} over time, we can use high-order Runge-Kutta (RK) methods that are commonly used to solve systems of ordinary differential equations. We define $\mathbf{R}^{(j)}(\mathbf{U}_i^n,\mathbf{Q}^n)$ as the solution at intermediate stage $j$ at time $t=t_n+c_k\Delta t$ for the general $r$-order $r$-stages explicit RK method applied to \eqref{pd:dis1}, where $j$, $k=1,2,\ldots,r$. Here, $\mathbf{U}_i^n$ represents the solution $\mathbf{U}(\mathbf{x}_i,t_n)$ and $\mathbf{Q}^n=\{\mathbf{U}_j^n,\ \mathbf{x}_j\in\mathcal{H}_{\mathbf{x}_i}\}$. The following equations are used for the RK method:

%To discretize the system described by Eqs \eqref{mat:f1} and \eqref{mat:f2} in time, we can utilize high-order Runge-Kutta (RK) methods, commonly employed for solving systems of ordinary differential equations. Specifically, for the general $r$-order $r$-stages explicit RK method applied to \eqref{pd:dis1}, we define $\mathbf{R}^{(j)}(\mathbf{U}_i^n,\mathbf{Q}^n)$ as the solution at intermediate stages $j$ at time $t=t_n+c_k\Delta t$, where $j$, $k=1,2,\ldots,r$. Here, $\mathbf{U}_i^n$ represents the solution $\mathbf{U}(\mathbf{x}_i,t_n)$, and $\mathbf{Q}^n=\{\mathbf{U}_j^n,\ \mathbf{x}_j\in\mathcal{H}_{\mathbf{x}_i}\}$. Then we have the following equations for the RK method:
\begin{equation}\label{rkeq}
	\begin{aligned}
	\mathbf{R}^{(j)}(\mathbf{U}^n_i,\mathbf{Q}^n)&=\mathbf{U}^n_i+\Delta t \sum_{k=1}^r a_{jk} \mathcal{L}_i\left(\mathbf{R}^{(k-1)}, t_n+c_k \Delta t\right), \quad j=1, \ldots, r, \\
	\mathbf{U}^{n+1}_i&=\mathbf{U}^n_i+\Delta t \sum_{k=1}^r b_k \mathcal{L}_i\left(\mathbf{R}^{(k)}, t_n+c_k \Delta t\right),
	\end{aligned}
\end{equation}
where $\mathbf{R}^{(j)}=\{\mathbf{R}^{(j)}(\mathbf{U}^n_i,\mathbf{Q}^n),\mathbf{x}_j\in\mathcal{H}_{\mathbf{x}_i}\}$, and constants $c_j$, $a_{ij}$ and $b_k$ satisfy
\begin{equation}
c_j=\sum_{k=1}^r a_{jk}, \quad\sum_{k=1}^r b_{k}=1,\quad j=1, \ldots, r.
\end{equation}
The RK method can be represented compactly in the Butcher Tableau, as shown in Table \ref{RK}.

\begin{table}[!ht]\begin{center}	
		\caption{Butcher Tableau for the explicit $r$-order $r$-stages RK method}
		\vspace{0.15in}
		\setlength{\tabcolsep}{5mm}{
			\begin{tabular}{c|cccccc} \hline
				$0$ & &  & & \\
				$c_2$ & $a_{21}$ &  &  &\\
				$c_3$ & $a_{31}$ &$a_{32}$  &  &\\
				$\vdots$ &$\vdots$ &  & $\ddots$& \\
				$c_r$ & $a_{r1}$ & $a_{r2}$ & $\cdots$&$a_{r,r-1}$& \\
				\hline & $b_1$ &$b_2$ &$\cdots$&$b_r$
		\end{tabular}}\label{RK}
\end{center}\end{table}

A thorough description and analysis of the coefficients are given in \cite{RK}. In this case, we opt for a one-parameter family of a third-order, three-stages ($r$=3) explicit Runge-Kutta (RK) method, referred to as RK3, for the articulation of \eqref{pd:e1}, as illustrated in Table \ref{RK3}. To enhance the accuracy and stability of the solutions, an explicit fourth-order, four-stages ($r$=4) Runge-Kutta method, referred to as RK4, has also been adopted, as demonstrated in Table \ref{RK4}.

\begin{table}[!ht]\begin{center}	
		\caption{Butcher Tableau for the RK3}
		\vspace{0.15in}
		\setlength{\tabcolsep}{5mm}{
			\begin{tabular}{c|cccccc} \hline
				$0$ &  &  &  \\
				$2/3$ & $2/3$ &  & \\
				$2/3$ & $0$ & $2/3$ &  \\
				\hline & $1/4$ &$3/8$ &$3/8$
		\end{tabular}}\label{RK3}
\end{center}\end{table}

\begin{table}[!ht]\begin{center}	
		\caption{Butcher Tableau for the RK4}
		\vspace{0.15in}
		\setlength{\tabcolsep}{5mm}{
			\begin{tabular}{c|cccccc} \hline
				$0$ &  & &  &  \\
				$1/2$ & $1/2$ &  &  &  \\
				$1/2$ & $0$ & $1/2$ &  & \\
				$1$ & $0$ & $0$ & $1$ &  \\
				\hline & $1/6$ & $1/3$ & $1/3$ & $1/6$
		\end{tabular}}\label{RK4}
\end{center}\end{table}

High-order RK methods in PD temporal discretization offer superior accuracy and lower computational times compared to other approaches. However, the PD model often requires a finer time step size to handle crack-related issues, resulting in increased computational load. Striking a balance between accuracy and computational efficiency is key in PD simulations. A multi-time-step (MTS) scheme rooted in the RK method effectively solves this challenge, adapting fine time step sizes around cracks to maintain accuracy while reducing computational costs.

\subsection{Multi-time-step schemes}

This section presents the multi-time-step (MTS) scheme applied to a rectangular Cartesian domain denoted as $\Omega$. The domain $\Omega$ is divided into two subdomains: $\hat{\Omega}_C$ and $\hat{\Omega}_F$, that is, $\Omega = \hat{\Omega}_C \cup \hat{\Omega}_F$. Within $\hat{\Omega}_C$, a coarse time step size $\Delta t$ is assigned, while within $\hat{\Omega}_F$, a fine time step size $\Delta t_k$ is specified for $k = 0, \ldots, K$, where $K > 0$ is the refinement level. It is worth noting that the relationship $\Delta t = K\Delta t_k$ is enforced.

To address the nonlocal nature of the PD model and potential interactions between subdomains with different time step sizes, we further divide the entire domain $\Omega$ into four subdomains ($\Omega_F, \Omega_{FI}, \Omega_{cI}, \Omega_C$) as depicted in Fig. \ref{Fig:2}. Subdomains $\Omega_{FI}$ and $\Omega_{CI}$ act as internal boundary layers (BL). 
\\

$\bullet$ $\Omega_{F}=\hspace{0.4em}\left\{\mathbf{x}_i:\mathbf{x}_i\in\hat{\Omega}_F,\hspace{0.5em}\left( \hat{\Omega}_F\cap\mathcal{H}_{\mathbf{x}_i}\right)=\mathcal{H}_{\mathbf{x}_i}\right\};$\vspace{0.3em}

$\bullet$ $\Omega_{FI}=\left\{\mathbf{x}_i:\mathbf{x}_i\in\hat{\Omega}_F,\hspace{0.5em} \left(\hat{\Omega}_F\cap\mathcal{H}_{\mathbf{x}_i}\right)\neq\mathcal{H}_{\mathbf{x}_i}\right\}$;\vspace{0.3em}

$\bullet$ $\Omega_{CI}=\left\{\mathbf{x}_i:\mathbf{x}_i\in\hat{\Omega}_C,\hspace{0.5em} \left(\hat{\Omega}_C\cap\mathcal{H}_{\mathbf{x}_i}\right)\neq\mathcal{H}_{\mathbf{x}_i}\right\}$;\vspace{0.3em}

$\bullet$ $\Omega_{C}=\hspace{0.4em}\left\{\mathbf{x}_i:\mathbf{x}_i\in\hat{\Omega}_C,\hspace{0.5em} \left(\hat{\Omega}_C\cap\mathcal{H}_{\mathbf{x}_i}\right)=\mathcal{H}_{\mathbf{x}_i}\right\}$.\vspace{0.3em}

\begin{figure}[!ht]
	\centering            
	\includegraphics[scale=0.45]{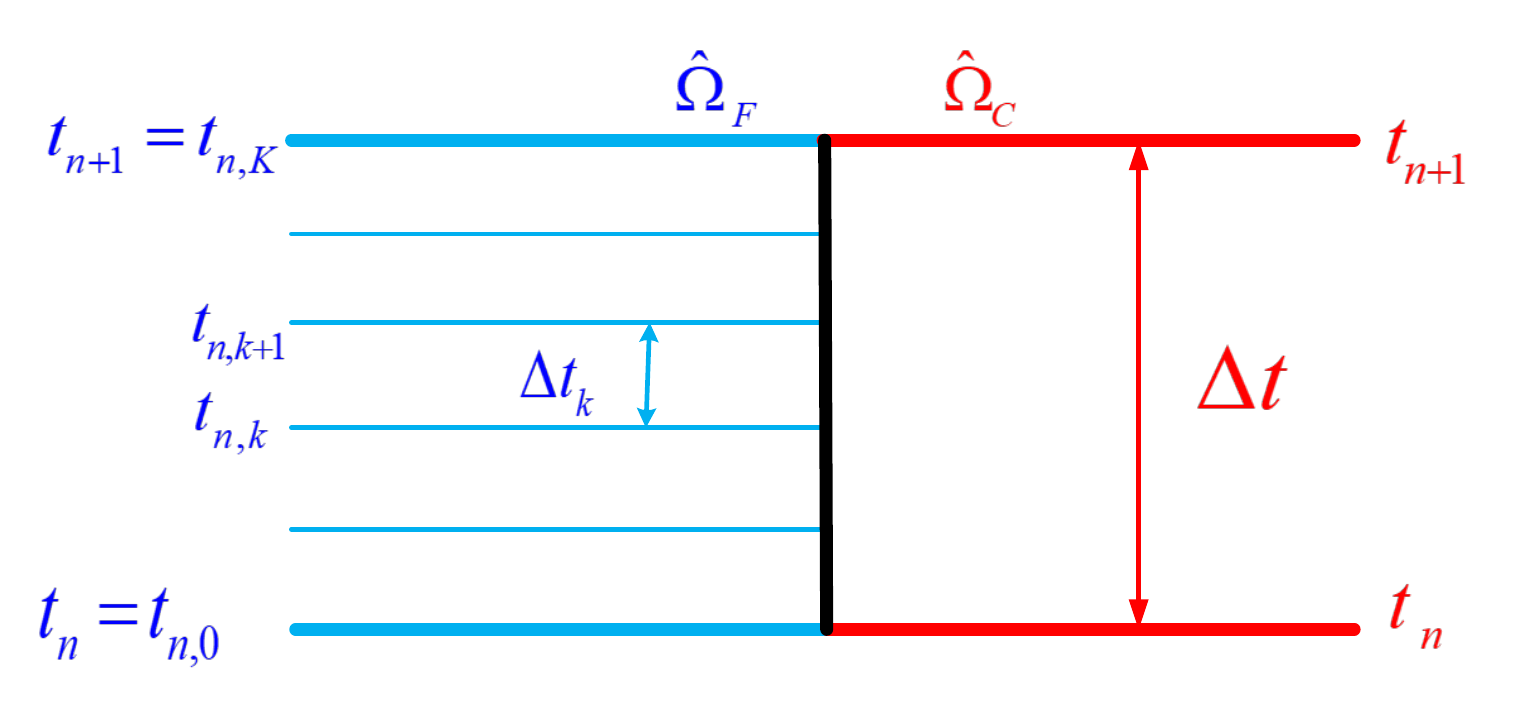}  
	\caption{Different time step size for the subdomains $\hat{\Omega}_{F}$ (fine time step size $\Delta t_k$) and $\hat{\Omega}_{C}$ (coarse time step size $\Delta t$).
	} 
	\label{Fig:CF}
\end{figure}

\begin{figure}[!ht]
	\centering            
	\includegraphics[scale=0.5]{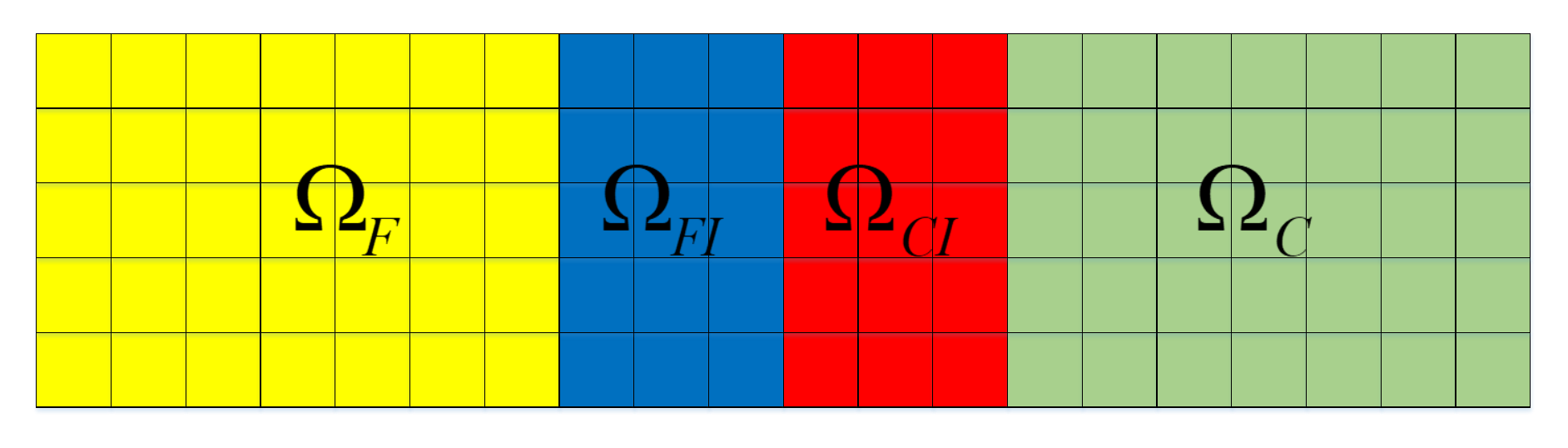}  
	\caption{Four subdomains of the physical domain $\Omega$. The subdomain $\Omega_{F}$, BL subdomain $\Omega_{FI}$,  BL subdomain $\Omega_{CI}$, and subdomain$\Omega_{C}$ are colored in yellow, blue, red, and green, respectively. Here  $\hat{\Omega}_C=\Omega_{C}\cup\Omega_{CI}$ and $\hat{\Omega}_F=\Omega_{F}\cup\Omega_{FI}$.
	} 
	\label{Fig:2}
\end{figure}

As outlined below, we shall use different numerical methods to compute material points in different subdomains based on their distinct properties.
\begin{itemize}
 \item Material points in subdomains $\Omega_{C}\cup\Omega_{F}$, which are not affected by subdomain $\hat{\Omega}_F$ with a finer time step size of $\Delta t_k$, are computed using the RK method with their respective time step size, that is, $\Delta t$ for $\Omega_C$ and $\Delta t_k$ for $\Omega_F$. 
\item  Material points in BL subdomain $\Omega_{CI}$ have a time step size of $\Delta t$ but are influenced by subdomain $\hat{\Omega}_F$ that has a time step size of $\Delta t_k$. For this subdomain, we incorporate {\em correction term} as shown in \eqref{lrkc32}. 
\item  Material points in BL subdomain $\Omega_{FI}$ have a time step size of $\Delta t_k$ and are affected by subdomain $\hat{\Omega}_{C}$ that has a time step size $\Delta t$. However, since the solution advances from $t_{n,k}$ to $t_{n,k+1}$ with a finer time step size $\Delta t_k$, the intermediate solutions in $\Omega_{CI}$ are unavailable. To compensate for this, we use {\em high-order interpolation polynomials} to estimate these missing intermediate solutions as depicted in Fig. \ref{Fig:CF}.
\end{itemize}
Therefore, integrating methods from different subdomains is essential for constructing the MTS scheme. In the following sections, we will provide specific advanced stages for third- and fourth-order MTS schemes and prove their accuracy.

\subsubsection{The third and fourth order multi-time-step scheme}

In this section, we introduce the third-order and fourth-order MTS schemes as solutions to address the challenge of varying time step sizes. To begin the MTS schemes, the solutions at $t_0$ and $t_1$ are defined as follows:
\begin{equation}
	\mathbf{U}^0_i=(\mathbf{u}_0,\mathbf{v}^0)^{\mathrm{T}},\hspace{0.5em} \mathbf{U}^1_i=\mathbf{U}^0_i++\Delta t \sum_{k=1}^r b_k \mathcal{L}_i\left(\mathbf{R}^{(j)}, t_n+c_k \Delta t\right),
\end{equation}
where $\mathbf{u}_0$ and $\mathbf{v}^0$ is the displacement and velocity at $t_0$, respectively. $\mathbf{R}^{(j)}$ is obtained by the \eqref{rkeq} when $n=0$. Then the MTS scheme consists of three steps: first, advance $\mathbf{U}^i_n$ from $t_n$ to $t_{n+1}$ ($n\geq 2$) for the material points on $\hat{\Omega}_C$, next utilize their solutions to construct an interpolation polynomial and update the intermediate solution at the fine time step sizes, and finally advance on the subdomain $\hat{\Omega}_{F}$.
\begin{itemize}
\item[1)] Advancing from $t_n$ to $t_{n+1}$ ($n\geq 2$) on the  subdomain  $\hat{\Omega}_C$ \\

Due to the influence of material points on the BL subdomain $\Omega_{FI}$, the advancement from $t_n$ to $t_{n+1}$ of material points in the BL subdomain $\Omega_{CI}$ requires the inclusion of a {\em correction term} in the expression for $\mathbf{R}^{(j)}_i(\mathbf{U}^n_i,\mathbf{Q}^n)$, where $j=1,\dots,r$. Let us denote $\overline{\Omega}_{C}=\hat{\Omega}_C\cup\Omega_{FI}$, $\mathbf{Q}^i_C=\mathbf{U}^{i}|_{\overline{\Omega}_{C}}$, and $\mathbf{Q}^{(i)}_C=\mathbf{U}^{(i)}|_{\overline{\Omega}_{C}}$. For a $r$-order $r$-stages RK method on the subdomain $\overline{\Omega}_{C}$, the following expression can be obtained for the value at the intermediate stages $\mathbf{U}^{(j)}_i$:
	\begin{equation}\label{lrkc32}
	\mathbf{U}^{(j)}_i=\begin{cases}&\mathbf{R}^{(j)}_i(\mathbf{U}^n_i,\mathbf{Q}^n_C)\vspace{0.5em},\mathbf{x}_i\in\Omega_{CI},\\
	&\mathbf{U}^n_i+\Delta t \sum\limits_{k=1}^r a_{jk} \mathcal{L}_i\left(\mathbf{D}_C^{(k-1)}, t_n+c_k \Delta t\right),\\
	\end{cases}
	\end{equation}
	where $\mathbf{D}_C^{(j)}=\{\mathbf{R}^{(j)}(\mathcal{L}_i(\mathbf{Q}^n_C),\mathbf{G}^n),\hspace{0.5em}\mathbf{x}_j\in\mathcal{H}_{\mathbf{x}_i}\}$, $\mathbf{G}^n_C=\{\mathcal{L}_j(\mathbf{Q}^n_C),\hspace{0.5em}\mathbf{x}_j\in\mathcal{H}_{\mathbf{x}_i}\}$. For the derivatives in $\mathbf{D}^{(j)}_C$, we  use the backward difference scheme to estimate, as follows:
	\begin{equation}\label{sd}
	\begin{aligned}
	&\mathcal{L}^{''}(\mathbf{Q}^n_C)=	\dfrac{\mathcal{L}_i\left(\mathbf{Q}^n_C\right)-\mathcal{L}_i\left(\mathbf{Q}^{n-1}_C\right)}{\Delta t},\\
	&\mathcal{L}^{'''}(\mathbf{Q}^n_C)=\dfrac{\mathcal{L}_i\left(\mathbf{Q}^n_C\right)-\mathcal{L}_i\left(\mathbf{Q}^{n-1}_C\right)}{\Delta t^2}-\dfrac{\mathcal{L}_i\left(\mathbf{Q}^{n-1}_C\right)-\mathcal{L}_i\left(\mathbf{Q}^{n-2}_C\right)}{\Delta t^2},
	\end{aligned}
	\end{equation}
	then the value $\mathbf{U}^{n+1}_i$ for $\mathbf{x}_i\in\hat{\Omega}_{C}$  can be obtained by
	\begin{equation}
	\mathbf{U}^{n+1}_i=\mathbf{U}^n_i+
	\begin{cases}
	&\dfrac{\Delta t}{4} \left(\mathcal{L}_i\left(\mathbf{Q}^n_C\right)+\mathcal{L}_i\left(\mathbf{Q}^{(1)}_C\right)+\mathcal{L}_i\left(\mathbf{Q}^{(2)}_C\right)\right),\hspace{0.5em}\text{$r$=3},\vspace{0.5em}\\
	&\dfrac{\Delta t}{6} \Big(\mathcal{L}_i\left(\mathbf{Q}^n_C\right)+2\mathcal{L}_i\left(\mathbf{Q}^{(1)}_C\right)+2\mathcal{L}_i\left(\mathbf{Q}^{(2)}_C\right)+2\mathcal{L}_i\left(\mathbf{Q}^{(3)}_C\right)\Big),\hspace{0.5em}\text{$r$=4}.
	\end{cases}			
	\end{equation}
	Therefore, the value $\mathbf{U}_i^n$ is advanced from $t_n$ to $t_{n+1}$ on the subdomain $\hat{\Omega}_C$.

	\item[2)] Update the intermediate solution $\mathbf{U}^{n,k}_i$ on the BL subdomain $\Omega_{CI}$ \\
	
To make progress within the subdomain $\hat{\Omega}_F$, the computation of intermediate temporal values $\mathbf{U}^{n,k}_i$ for $\mathbf{x}_i\in\Omega_{IC}$ is necessary. Here, the index $k$ ranges from 1 to $K$, representing the number of intermediate time levels. To predict $\mathbf{U}^{n,k}_i$, an interpolation technique is employed using an interpolating polynomial $\mathcal{I}_i(t)$. It is required that this polynomial satisfies specific properties for a $r$-order $r$-stages Runge-Kutta explicit scheme, as follows.
	
	\begin{equation}
	\mathcal{I}_i(t_{n,k}) = \mathbf{U}^{n,k}_i + O(\Delta t^{r+1}).
	\end{equation}
	
In this context, it is assumed that the operator $\mathcal{L}$ is sufficiently smooth, which allows us to employ the Taylor expansion for constructing the interpolation polynomial:
	\begin{equation}\label{Lte}
	\begin{aligned}
	\mathbf{U}^{n,k}_i = \mathbf{U}^n_i + \mathcal{L}_i\left(\mathbf{Q}^n_C\right)(t_{n,k}-t_n) + \cdots+ \mathcal{L}_i^{(r-1)}\left(\mathbf{Q}^n_C\right)\frac{(t_{n,k}-t_n)^3}{r!} + O(\Delta t^{r+1}).
	\end{aligned}
	\end{equation}
		
    Let's define $\mathbf{A}^r$ as an $(r-1) \times (r-1)$ matrix, and $\mathbf{f}^r$ and $\mathbf{d}^r$ as $(r-1)$-dimensional vectors, where $d_i^r = \mathcal{L}^{(i-1)}(\mathbf{Q}^n_C)$. Then, the derivative of $\mathcal{L}_i$ can be estimated as:
    \begin{equation}
    \mathbf{f}^r = \mathbf{A}^r\mathbf{d}^r + O(\Delta t^{r+1}).
    \end{equation}   
    We remark that the entries $f^r_i$ in the vector $\mathbf{f}^r$ and $a^r_{ij}$ in the matrix $\mathbf{A}^r$ can be obtained easily using the Taylor series expansion. 
    
    Specifically, for the fourth-order scheme ($r$=4), one has
	\begin{equation}
		\mathbf{f}^4=
		\left[
		\begin{array}{ccc}
		&\dfrac{\mathbf{U}^{n+1}_i-\mathbf{U}^n_i-\Delta t \mathcal{L}_i\left(\mathbf{Q}^n_C\right)}{\Delta t^{2}}\vspace{0.2em}\\
		&\dfrac{\mathcal{L}_i\left(\mathbf{Q}^n_C\right)-\mathcal{L}_i\left(\mathbf{Q}^{n-1}_C\right)}{\Delta t}\vspace{0.2em}\\
		&\dfrac{\mathcal{L}_i\left(\mathbf{Q}^{n-1}_C\right)-\mathcal{L}_i\left(\mathbf{Q}^{n-2}_C\right)}{\Delta t}\vspace{0.2em}\\
		\end{array}\right],\hspace{0.5em}\mathbf{A}^4=
		\left[
		\begin{array}{rrr}
		\dfrac{1}{2}&\dfrac{\Delta t}{6}&\dfrac{\Delta t^2}{24}\vspace{0.3em}\\
		1&-\dfrac{\Delta t}{2}&\dfrac{\Delta t^2}{6}\vspace{0.3em}\\
		1&-\dfrac{3\Delta t}{2}&\dfrac{7\Delta t^2}{6}\\
		\end{array}\right],
		\end{equation}
then $\mathbf{d}^4$ can be obtain by $\mathbf{d}^r=(\mathbf{A}^r)^{-1}\mathbf{f}^r+O(\Delta t^{r+1})$, where
\begin{equation}
(\mathbf{A}^4)^{-1}=
\left[
\begin{array}{ccc}
\dfrac{8}{9}&\dfrac{37}{54}&-\dfrac{7}{54}\vspace{0.3em}\\
\dfrac{8}{3\Delta t}&-\dfrac{13}{9\Delta t}&\dfrac{1}{9\Delta t}\vspace{0.3em}\\
\dfrac{8}{3\Delta t^2}&-\dfrac{22}{9\Delta t^2}&\dfrac{10}{9\Delta t^2}\\
\end{array}\right].
\end{equation}

Substituting results into \eqref{Lte}, an interpolation polynomial can be obtained as follows when $t_n \leq t \leq t_{n+1}$:
\begin{equation}\label{I2}
\begin{aligned}
\mathcal{I}_i(t)= & \mathbf{U}^n_i+\left(t-t_n\right) \mathcal{L}_i\left(\mathbf{Q}^{n}_C\right)+\sum_{j=2}^{r-1} \dfrac{\left(t-t_n\right)^{j}}{j!}\beta_{j-1},
\end{aligned}
\end{equation}
\normalsize
with
\begin{equation}
	\beta_j=\sum\limits_{i=1}^{r-1}\gamma_{ji}f_i,
\end{equation}
where $\gamma_{ij}$ is the entry in $(\mathbf{A}^r)^{-1}$.
Finally, one has $\mathbf{U}^{n,k}_i = \mathcal{I}_i(t_{n,k}), \mathbf{x}_i\in\Omega_{CI}$.

\begin{remark}
For the third-order ($r$=3) scheme, one set 
\begin{equation}\label{relam}
a^{3}_{ij}=a^4_{ij},\hspace{0.5em}f^{3}_i=f^4_i.
\end{equation}
then 
$\mathbf{d}^3$ can be obtained in the same manner as $\mathbf{d}^4$, where
\begin{equation}
(\mathbf{A}^3)^{-1}=
\left[
\begin{array}{ccc}
\dfrac{6}{5}&\dfrac{2}{5}\vspace{0.5em}\\
\dfrac{12}{5\Delta t}&-\dfrac{-6}{5\Delta t}\\
\end{array}\right],
\end{equation}
and $\mathcal{I}_i(t)$ is computed by \eqref{I2}.
%According to \eqref{relam}, the interpolation polynomial \eqref{I2} is an approximation of a third-order schemes when $\beta_3=0$.   
\end{remark}

\item[3)] Advancing from $t_n$ to $t_{n+1}$ on the subdomain $\hat{\Omega}_F$ \\

By constructing $\mathbf{U}^{n,k}$ through interpolation polynomials, the expression $\mathbf{R}^{(j)}_i(\mathbf{U}^n_i,\mathbf{Q}^n)$, where $j=1,\dots,r$ and $\mathbf{x}_i\in\Omega_{IC}$, can be rewritten to advance the material points on $\hat{\Omega}_F$. Let $\overline{\Omega}_{F}=\Omega_{CI}\cup\hat{\Omega}_F$, $\mathbf{Q}^i_F=\mathbf{U}^{i}|_{\overline{\Omega}_{F}}$, and $\mathbf{Q}^{(i)}_F=\mathbf{U}^{(i)}|_{\overline{\Omega}_{F}}$. The solution at the intermediate stages is then computed using the Runge-Kutta method for $k=1,2,\dots, K$ ($K > 1$ is the refinement level):
		\begin{equation}\label{lrk31}
		\hspace{-9.5em}
		\overline{\mathbf{U}}^{(j,k)}_i=
		\begin{cases}
		&\mathbf{R}^{(j)}_i(\mathbf{U}^{n,k}_i,\mathbf{Q}^{n,k}_F),\hspace{0.5em} \mathbf{x}_i\in\hat{\Omega}_F,\vspace{0.5em}\\
		&\mathbf{U}^n_i+\Delta t \sum\limits_{k=1}^r a_{jk} \mathcal{L}_i\left(\mathbf{D}_F^{(k-1)}, t_n+c_k \Delta t\right),\hspace{0.5em}\mathbf{x}_i\in\Omega_{CI},\\
		\end{cases}
		\end{equation}
where $\mathbf{R}^{(j)}_i$ is obtained by Tables \ref{RK3} and \ref{RK4} for the RK3 and RK4 schemes, respectively, and where $\mathbf{D}_F^{(j)}=\{\mathbf{R}^{(j)}_i(\mathcal{I}_i(t_{n,k}),\mathbf{I}),\hspace{0.5em}\mathbf{x}_j\in\mathcal{H}_{\mathbf{x}_i}\}$, $\mathbf{I}=\{\mathcal{I}_j(t_{n,k}),\hspace{0.5em}\mathbf{x}_j\in\mathcal{H}_{\mathbf{x}_i}\}$. Then the value $\mathbf{U}^n_i$ can be advanced from $t_n$ to $t_{n+1}$ by
		\begin{equation}\label{lrk33}
		\mathbf{U}^{n,k+1}_i=\mathbf{U}^{n,k}_i+
		\begin{cases}
		\dfrac{\Delta t_{k}}{4}\left(\mathcal{L}_i\left(\mathbf{Q}^{n,k}_F\right)+\frac{3}{2} \mathcal{L}_i\left(\mathbf{Q}^{(1,k)}_F\right)+\dfrac{3}{2}\right.\left. \mathcal{L}_i\left(\mathbf{Q}^{(2,k)}_F\right)\right), \hspace{0.5em} &\text{$r$=3},\vspace{0.5em}\\
		\dfrac{\Delta t_k}{6}\Bigg(\mathcal{L}_i\left(\mathbf{Q}^{n,k}_F\right)+2 \mathcal{L}_i\left(\mathbf{Q}^{(1,k)}_F\right)+2 \mathcal{L}_i\left(\mathbf{Q}^{(2,k)}_F\right)+\mathcal{L}_i\left(\mathbf{Q}^{(3,k)}_F\right)\Bigg),  \hspace{0.5em} &\text{$r$=4},\\
		\end{cases}
		\end{equation}
\end{itemize}
as a result of the aforementioned computations, the value $\mathbf{U}^{n+1}_i$ is obtained for all material points $\mathbf{x}_i\in\Omega$.
\subsubsection{Convergence analysis}
The MTS and UPD schemes exhibit the same convergence behavior in smooth cases. The third- and fourth-order MTS schemes follow the theorem below: 
 \begin{theorem} 
If the operator $\mathcal{L}$ is $C^5$, the global error of the r-order MTS scheme under the uniform time step size $\Delta t$ is $O(\Delta t^{r})$, where $\mathbf{U}(\mathbf{x},t)$ is the exact solution of the semi-discrete system in \eqref{pd:dis1}. This holds for $r=3,4$.
 \end{theorem}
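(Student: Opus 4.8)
The plan is to prove convergence through the classical consistency-plus-stability route. Since $\mathcal{L}$ is $C^5$, the (spatially discrete, hence finite-dimensional) right-hand side of \eqref{pd:dis1} is Lipschitz on bounded sets, so the underlying $r$-stage RK method is zero-stable; it therefore suffices to show that the full MTS update has one-step local truncation error (LTE) $O(\Delta t^{r+1})$ uniformly over all material points, after which a discrete Gr\"onwall argument upgrades this to the global error $O(\Delta t^{r})$ on $[0,T]$. First I would dispose of the interior subdomains: on $\Omega_C$ and $\Omega_F$ neither a correction term nor an interpolation enters, so the update is literally the standard RK scheme \eqref{rkeq} with step $\Delta t$ and $\Delta t_k=\Delta t/K$ respectively, whose LTE is the classical $O(\Delta t^{r+1})$; since $K$ is fixed, the $K$ fine substeps accumulate to $O(\Delta t^{r+1})$ as well. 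The content of the theorem thus lives entirely in the boundary layers $\Omega_{CI}$ and $\Omega_{FI}$ and in the interpolation coupling them.

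The heart of the argument is to show that the interpolant $\mathcal{I}_i(t)$ of \eqref{I2} satisfies $\mathcal{I}_i(t_{n,k}) = \mathbf{U}(\mathbf{x}_i,t_{n,k}) + O(\Delta t^{r+1})$. I would start from the exact Taylor expansion \eqref{Lte}, whose truncation after the $\mathcal{L}^{(r-1)}$ term leaves a remainder $O(\Delta t^{r+1})$; this is precisely where $\mathcal{L}\in C^5$ is needed, guaranteeing $\mathbf{U}\in C^{r+1}$ for $r\le 4$. The remaining task is to control the error from replacing the exact derivatives $\mathcal{L}^{(j)}$ by the backward-difference surrogates encoded in $\mathbf{d}^r=(\mathbf{A}^r)^{-1}\mathbf{f}^r$. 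Tracking this componentwise, the truncation error in the data vector $\mathbf{f}^r$ built from \eqref{sd} is $O(\Delta t^{r-1})$, while row $j$ of $(\mathbf{A}^r)^{-1}$ scales like $\Delta t^{-(j-1)}$ — visible in the explicit inverses listed for $r=3,4$ — so the reconstructed $\mathcal{L}^{(j)}$ carries error $O(\Delta t^{\,r-j})$, the higher derivatives being estimated the most crudely.

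The crucial balancing step is that in \eqref{Lte} the $j$-th derivative is weighted by $(t_{n,k}-t_n)^{j+1}/(j+1)! = O(\Delta t^{\,j+1})$, so each term contributes error $O(\Delta t^{\,j+1})\cdot O(\Delta t^{\,r-j}) = O(\Delta t^{\,r+1})$, and summing the finitely many terms preserves $O(\Delta t^{r+1})$. I would invoke the same accuracy-versus-weight trade-off to treat the correction stages $\mathbf{D}_C^{(j)}$ of \eqref{lrkc32} on $\Omega_{CI}$ and the stage values $\mathbf{D}_F^{(j)}$ of \eqref{lrk31} on $\hat{\Omega}_F$: there each low-order backward difference appears multiplied by a power of $\Delta t$ coming from the coefficients $a_{jk}$, so the stage and update formulas \eqref{lrk33} retain LTE $O(\Delta t^{r+1})$. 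For the fine sweep \eqref{lrk31}--\eqref{lrk33} I would additionally note that the interpolated data $\mathbf{U}^{n,k}_i$ enter as boundary values carrying error $O(\Delta t^{r+1})$; since $\mathcal{L}$ is Lipschitz and only $K=O(1)$ substeps are taken per coarse step, this seed error propagates by a factor $1+O(\Delta t)$ and stays $O(\Delta t^{r+1})$.

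I expect the main obstacle to be exactly this componentwise accounting for $\mathbf{d}^r=(\mathbf{A}^r)^{-1}\mathbf{f}^r$: one must verify rigorously that the $\Delta t^{-(j-1)}$ blow-up in the rows of $(\mathbf{A}^r)^{-1}$ is cancelled precisely by the $\Delta t^{\,j+1}$ weights in the Taylor polynomial (and by the $a_{jk}$ factors in the correction and stage formulas), with no residual term of order $\Delta t^{r}$ surviving. A secondary subtlety, easy to overlook, is that $\mathbf{f}^r$ depends on the already-computed $\mathbf{U}^{n+1}_i$; here I would use the sequential ordering of the scheme — advance $\hat{\Omega}_C$, then interpolate on $\Omega_{CI}$, then advance $\hat{\Omega}_F$ — to confirm that the $O(\Delta t^{r+1})$ LTE established on $\hat{\Omega}_C$ is available as clean input downstream, so that no circular loss of order occurs. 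Once every subdomain update is shown to have LTE $O(\Delta t^{r+1})$, the discrete Gr\"onwall estimate closes the proof for $r=3,4$.
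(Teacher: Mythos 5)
Your proposal is correct and takes essentially the same route as the paper: both arguments establish a one-step local error of $O(\Delta t^{r+1})$ subdomain by subdomain (standard RK theory on $\Omega_C\cup\Omega_F$, the Taylor-expansion/correction argument on $\Omega_{CI}$, and the interpolation estimate $\mathcal{I}_i(t_{n,k})=\hat{\mathbf{U}}_i(t_{n,k})+O(\Delta t^{r+1})$ feeding the fine sweep on $\hat{\Omega}_F$), and then convert local to global error by a stability/propagation bound --- the paper via Theorems I.10.2 and II.3.4 of Hairer et al., you via zero-stability and discrete Gr\"onwall, which is the same mathematical content. The only notable difference is that your componentwise balancing of the $\Delta t^{-(j-1)}$ growth in the rows of $(\mathbf{A}^r)^{-1}$ against the $\Delta t^{\,j+1}$ Taylor weights spells out in detail a step the paper merely asserts.
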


 \begin{proof}
Since the third-order scheme is a special form of the fourth-order scheme, we first provide proof of the fourth-order scheme. Without loss of generality, we assume $\mathbf{e}^{m}_i=0$ and $\mathbf{e}^{m,n}_i=0$ for  $m=1,2,\cdots, n$ and $n=1,2,\cdots, k$.
\begin{itemize}
\item[i)] First, we prove that the local error for the material points on the subdomain $\hat{\Omega}_C$ is $O(\Delta t^5)$.  Based on the construction process of the MTS scheme, the following expression is obtained:
 	\begin{align}\label{tyrk4}
 	\mathbf{U}^{n+1}_i= &\mathbf{U}^{n+1}_i+\dfrac{\Delta t}{6} \Big(\mathcal{L}_i\left(\mathbf{Q}^n_C\right)+2\mathcal{L}_i\left(\mathbf{Q}^{(1)}_C\right)+2\mathcal{L}_i\left(\mathbf{Q}^{(2)}_C\right)+2\mathcal{L}_i\left(\mathbf{Q}^{(3)}_C\right)\Big)\notag\\=&\mathbf{U}^{n+1}_i+\Delta t\mathcal{L}(\mathbf{Q}^n_C)+\frac{\Delta t^2}{2}\mathcal{L}^{'}(\mathbf{Q}^n_C)+\frac{\Delta t^2}{3!}\mathcal{L}^{''}(\mathbf{Q}^n_C)+\frac{\Delta t^4}{4!}\mathcal{L}^{'''}(\mathbf{Q}^n_C)+O(\Delta t^5)\notag\\
=&\hat{\mathbf{U}}^{n}_i+\Delta t \hat{\mathbf{U}}_i^{'}(t_n) +\frac{\Delta t^2}{2} \hat{\mathbf{U}}_i^{''}(t_n)+\frac{\Delta t^3}{3!} \hat{\mathbf{U}}_i^{'''}(t_n)+\frac{\Delta t^4}{4!} \hat{\mathbf{U}}_i^{''''}(t_n)+O(\Delta t^5)\\=&\hat{\mathbf{U}}_i(t_{n+1})+O(\Delta t^5),\hspace{0.5em}\mathbf{x}_i\in\Omega_{CI}.\notag
 	\end{align}
 	Thus the local error $\mathbf{e}_i^{n+1}$ for the material points on the BL subdomain $\Omega_{CI}$ is $O(\Delta t^5)$. It can be observed that the value of the material points on the subdomain $\Omega_{C}$ is obtained by RK4; thus, the local error is also $O(\Delta t^5)$.
\item[ii)] With the error obtained by the interpolation polynomial, the local error on the subdomain $\hat{\Omega}_F$ can be demonstrated as follows:
\begin{equation}
\mathcal{I}_i(t_{n,k})=\hat{\mathbf{U}}_i(t_{n,k})+O(\Delta t_k^5).
\end{equation}
Substituting into \eqref{lrk31}, we have
\begin{equation}
\overline{\mathbf{U}}^{(j,k)}_i=
\begin{cases}
&\mathbf{R}^{(j)}_i(\mathbf{U}^{n,k}_i,\mathbf{Q}^{n,k}_F),\hspace{0.5em} \mathbf{x}_i\in\hat{\Omega}_F,\vspace{0.5em}\\
&\mathbf{R}^{(j)}_i(\mathbf{U}^{n,k}_i,\hat{\mathbf{Q}}^{'}(t_{n,k}))+O(\Delta t_k^5),\hspace{0.5em}\mathbf{x}_i\in\Omega_{CI},\\
\end{cases}
\end{equation}
where $\hat{\mathbf{Q}}^{'}(t_{n,k})=\{\hat{\mathbf{U}}^{'}(t_{n,k}),\hspace{0.5em}\mathbf{x}_j\in\mathcal{H}_{\mathbf{x}_i}\}$.
By Taylor expansion, we can obtain follows 
\begin{equation}
	\mathbf{U}^{n,k+1}_i=\hat{\mathbf{U}}_i(t_{n,k+1})+O(\Delta t_k^5), \hspace{0.5em}\mathbf{x}_i\in\hat{\Omega}_F.
\end{equation}
Then the local error of the material points on the subdomain $\overline{\Omega}_F$ is $O(\Delta t_k^5) = O(\Delta t^5)$.

\item[iii)] The global error of the material points in the domain $\Omega$ can be obtained by considering the local error. Let $\mathbf{E}^{n+1}$ represent the value at $t_{n+1}$ of the local error committed at $t_{n,k}$ and propagated forward by the numerical scheme. The local error $\mathbf{e}^{n+1}_i$ can be expressed as the sum of $\mathbf{E}^{n,k}_i$ for $k=1,2,\dots,K$. Thus, the accumulated error can be estimated as follows:
\begin{equation}
\left|\mathbf{e}^{n+1}_i\right| \leqslant \sum_{k=1}^K\left|\mathbf{E}^{n, k}_i\right|.
\end{equation}
According to Theorem I.10.2 in \cite{AR}, it can be shown that:
\begin{equation}
\left|\mathbf{E}^{n, k}_i\right| \leqslant \mathbf{e}^{B\left(t_{n+1}-t_{n, k}\right)}_i\left|\mathbf{e}^{n, k}_i\right| \leqslant \mathbf{e}^{B\Delta t}_i\left|\mathbf{e}^{n, k}_i\right|,
\end{equation}
where $B$ is an upper bound on the Jacobian of \eqref{pd:dis1}. Since $|\mathbf{e}^{n,k}_i| \le C_2 \Delta t^4$, the estimate becomes:
\begin{equation}
\left|\mathbf{e}^{n+1}_i\right| \leqslant K\mathbf{e}^{B\Delta t}_i\left|\mathbf{e}^{n, k}_i\right| \leqslant K\left(1+\Delta t B+\frac{(\Delta t B)^2}{2}+\cdots\right) C_2 \Delta t^5 \leqslant C_3 \Delta t^5,
\end{equation}
here $C_2$ and $C_3$ are some constants.
Combining this with Theorem II.3.4 in \cite{AR}, we obtain the expression for the global error $\mathbf{E}_i$:
\begin{equation}
|\mathbf{E}_i| \leqslant  \frac{C_2}{K}\left(\mathbf{e}^{BT}-1\right)\Delta t^4.
\end{equation}
Similarly, one can also show that the global error of the third-order MTS scheme is $O(\Delta t^3)$. Thus, the theorem is proved.
\end{itemize}
 \end{proof}
 
 \section{Numerical results}
In this section, we present three examples to demonstrate the accuracy and convergence of the MTS scheme for bond-based peridynamic models on various geometries. All experiments were implemented in Fortran$^\circledR$ and conducted on a workstation with an Intel Xeon Gold 6240 CPU operating at 2.6GHz, with 2048GB of installed memory.
\subsection{Plate under  transverse loading}
In this section, we will consider a 2D linear bond-based peridynamic model applied to a plate under uniaxial tension. The plate has dimensions of $W = 1 \mathrm{~m}$ (width), $L = 0.5 \mathrm{~m}$ (length), and $d = 0.01 \mathrm{~m}$ (thickness). The geometry of the plate is illustrated in Figure \ref{Fig:3}.

\begin{figure}[!ht]
	\centering  
	\subfigure[]  	
	{
		\begin{minipage}{7cm}
			\centering          
			\includegraphics[scale=0.38]{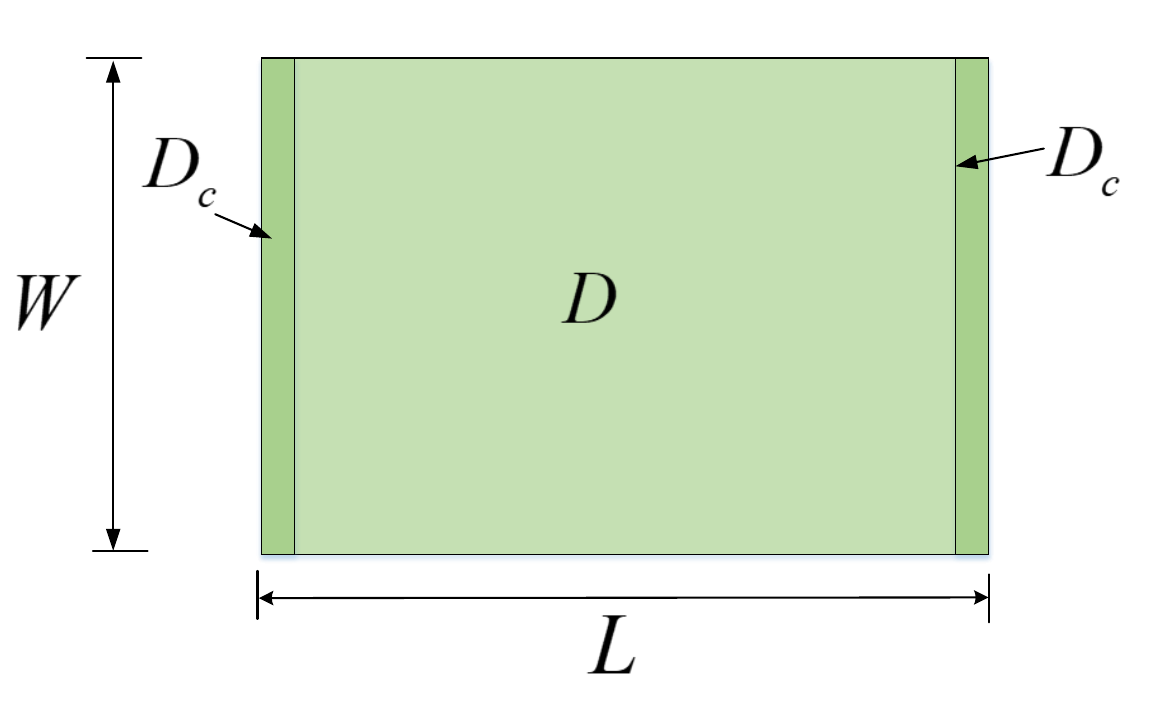}  
		\end{minipage}
	}
	\subfigure[]	
	{
		\begin{minipage}{7cm}
			\centering      
			\includegraphics[scale=0.38]{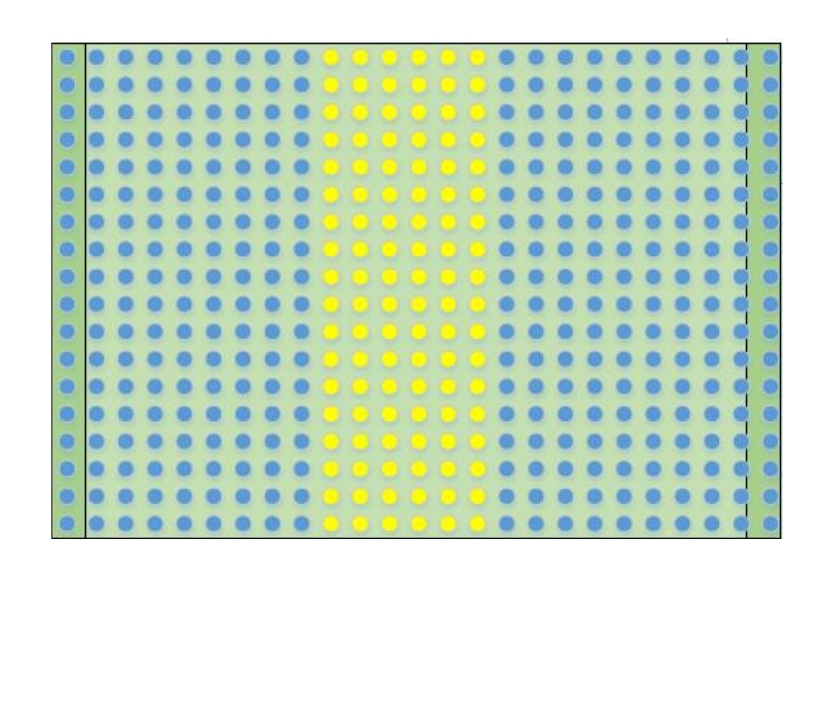}   
		\end{minipage}
	}
	
	\caption{Geometry of a 2D plate  under  transverse loading and its discretization.} 
	\label{Fig:3}
\end{figure}

The material properties are $E=1.92\times 10^{11}$ Mpa (elastic modulus), $\nu=1/3$ (Poisson's ratio) and $\rho=8000 \mathrm{~kg}/\mathrm{m}^3$ (density). The external loading $b_p=p_0W/d$ is applied to boundary layer $\mathcal{D}_c$, where the uniaxial tension loading $p_0 = 200 \mathrm{~Mpa}$, and the width of $\mathcal{D}_c$ is $d$. 
The horizon size $\delta$ is chosen as $0.03 \mathrm{~m}$.

For the discretization of the model, a mesh with a size of $100 \times 50$ is used. The yellow subdomain in Fig. \ref{Fig:3}(b) is computed using the MTS schemes with $\Delta t_k, k = 1,\ldots,K$ where $K$ is the refinement level, while the displacements in the other areas are obtained using $\Delta t$. To compare the performance of the MTS scheme and undecomposed PD (UPD) scheme, the time step sizes are refined gradually from $1.0 \times 10^{-5}\hspace{0.5em}\mathrm{s}$ to $0.125 \times 10^{-5}\hspace{0.5em}\mathrm{s}$, and the total simulation time considered is $4.0 \times 10^{-4}\hspace{0.5em}\mathrm{s}$. $L^2$ absolute errors $\|u_y-u_y^h\|_2$ is used to assess the accuracy of the schemes, where $u_y$ represents the displacement in the $y-$direction, $u_y^h$ represents the exact solution obtained using the displacement in the $y-$direction from the UPD scheme when the time step size is $\Delta t =\sqrt{4/5}\times10^{-7} \hspace{0.5em}\mathrm{s}$. 
 
\begin{table}[!ht]\begin{center}
		\caption{The error and order (CR) obtain by MTS scheme based on RK3 (MTS3) and RK4 (MTS4) with different $\Delta t$ and different refinement level K on a 2D model}
		\vspace{0.15in}
		\setlength{\tabcolsep}{2mm}{
			\begin{tabular}{|c|c|l|c|c|l|c|} \hline
				\multicolumn{2}{|c|}{}&\multicolumn{2}{c|}{MTS3}&\multicolumn{2}{c|}{MTS4}\\ \hline
				$\Delta t$ (s)& $K$ & error & CR& error & CR\\ \hline				
				\multirow{4}*{\makecell[c]{\hspace{0.75em}$1.0$E$-5$}} 
				&1&$3.28$E$-10$&-&$7.62$E$-11$&- \\
				&2&$2.54$E$-10$&-&$4.45$E$-11$&-\\
				&4&$2.41$E$-10$&-&$4.37$E$-11$&-\\
				&8&$2.40$E$-10$&-&$4.35$E$-11$&-\\\hline
				\multirow{4}*{\makecell[c]{\hspace{0.75em}$0.5$E$-5$}} 
				&1&$8.87$E$-11$&3.07&$5.45$E$-12$&3.91\\
				&2&$5.48$E$-11$&3.07&$3.16$E$-12$&3.91\\
				&4&$5.07$E$-11$&3.08&$3.04$E$-12$&3.93\\
				&8&$5.02$E$-11$&3.09&$3.03$E$-12$&3.93\\\hline
				\multirow{4}*{\makecell[c]{\hspace{0.25em}$0.25$E$-5$}} 
				&1&$1.28$E$-11$&3.03&$3.46$E$-13$&3.96\\
				&2&$7.66$E$-12$&3.04&$1.99$E$-13$&3.98\\
				&4&$7.10$E$-12$&3.04&$1.92$E$-13$&3.97\\
				&8&$7.03$E$-12$&3.04&$1.91$E$-13$&3.97\\\hline
				\multirow{4}*{\makecell[c]{$0.125$E$-5$}} 
				&1&$1.63$E$-12$&3.00&$2.17$E$-14$&3.98\\
				&2&$9.71$E$-13$&3.01&$1.25$E$-14$&3.98\\
				&4&$8.99$E$-13$&3.00&$1.20$E$-14$&3.99\\
				&8&$8.91$E$-13$&3.00&$1.20$E$-14$&3.99\\\hline
		\end{tabular}}\label{tab:1}
\end{center}\end{table}
Table \ref{tab:1} provides the error and order for the two-dimensional problem at different time step sizes. The comparison between the UPD scheme ($K = 1$) and the MTS scheme ($K > 1$) reveals that the relative error decreases as the refinement level $K$ increases. This indicates that refining $\Delta t$ by increasing $K$ leads to improved accuracy. For example, when refining $K$ from two to eight with $\Delta t= 0.5 \times 10^{-5}\hspace{0.5em}\mathrm{s}$, the error reduces from $8.87 \times 10^{-11}$ to $5.02 \times 10^{-11}$, highlighting the accuracy of the MTS  scheme.

Furthermore, an interesting observation is the preservation of the order of the UPD scheme in the MTS method. This means that the order of the MTS scheme, whether based on the RK3 or RK4 methods, remains consistent even when the time step size $\Delta t$ varies. This finding validates the efficacy of the analysis of numerical schemes and demonstrates that the MTS scheme maintains its expected order of accuracy across different time step sizes.
 
\subsection{Block under  transverse loading}
In this section, we conducted a PD simulation of a 3D block subjected to quasi-static transverse loading. The block has dimensions of $1.0\mathrm{~m}$ in length, $0.3\mathrm{~m}$ in width, and $0.3\mathrm{~m}$ in thickness, as shown in Fig. \ref{Fig:4} (a). The horizon size $\delta$ was chosen as $0.03\mathrm{~m}$. The external loading of $b_p=p_0W/d$ was applied to the area $D_c$, where the thickness $d$ of $D_c$ is $0.01\mathrm{~m}$, and the value of $p_0$ is $200\mathrm{~MPa}$. The material properties used in the simulation are an elastic modulus $E$ of $2.0\times 10^5\mathrm{~MPa}$ and a Poisson's ratio $\nu$ of $1/4$.
\begin{figure}[!ht]
	\centering  
	\subfigure[]  	
	{
		\begin{minipage}{7.5cm}
			\centering          
			\includegraphics[scale=0.7]{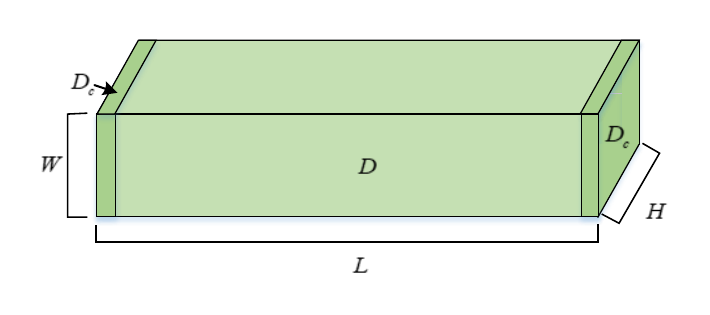}  
		\end{minipage}
	}
	\subfigure[]	
	{
		\begin{minipage}{7.5cm}
			\centering      
			\includegraphics[scale=0.75]{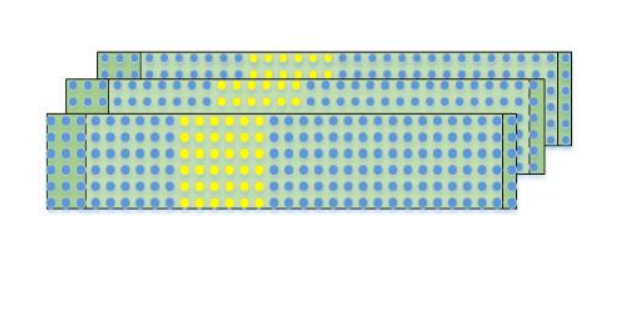}   
		\end{minipage}
	}
	
	\caption{Geometry of a 3D plate  under  transverse loading and its discretization.} 
	\label{Fig:4}
\end{figure}

A mesh with a size of $100 \times 30\times30$ is used to discrete this model. In order to investigate the convergence of the MTS scheme, the model was partitioned into two distinct subdomains. The first subdomain, represented by the yellow part in Fig. \ref{Fig:4} (b), corresponded to the area using $\Delta t_k$ with a thickness of $2\delta$. The second subdomain was computed using $\Delta t$. The time step size was refined from $1.0\times 10^{-5} \hspace{0.5em}\mathrm{s}$ to $0.125\times 10^{-5} \hspace{0.5em}\mathrm{s}$ to examine the accuracy of the scheme. To calculate the error, the displacement obtained from the UPD scheme when $\Delta t=1.0\times 10^{-8}$ was chosen as the exact solution. The error was then computed using the method outlined in Section 4.1.
\begin{table}[!ht]\begin{center}
		\caption{The error and order (CR) obtain by MTS scheme based on RK3 (MTS3) and RK4 (MTS4) with different $\Delta t$ and different refinement level $K$ on a 3D model}
		\vspace{0.15in}
		\setlength{\tabcolsep}{2mm}{
			\begin{tabular}{|c|c|l|c|l|c|c|c|} \hline
				\multicolumn{2}{|c|}{}&\multicolumn{2}{c|}{MTS3}&\multicolumn{2}{c|}{MTS4}\\ \hline
			$\Delta t$ (s)& $K$ & error & CR& error & CR\\ \hline				
				\multirow{4}*{\makecell[c]{\hspace{0.75em}$1.0$E$-5$}} 
				&1&$8.70$E$-08$&-&$7.72$E$-09$&- \\
				&2&$6.21$E$-08$&-&$5.48$E$-09$&-\\
				&4&$6.15$E$-08$&-&$5.46$E$-09$&-\\
				&8&$6.15$E$-08$&-&$5.46$E$-09$&-\\\hline
				\multirow{4}*{\makecell[c]{\hspace{0.75em}$0.5$E$-5$}} 
				&1&$1.14$E$-08$&3.12&$4.89$E$-10$&3.99\\
				&2&$8.13$E$-09$&3.13&$3.47$E$-10$&4.02\\
				&4&$8.06$E$-09$&3.13&$3.46$E$-10$&4.01\\
				&8&$8.06$E$-09$&3.13&$3.46$E$-10$&4.01\\\hline
				\multirow{4}*{\makecell[c]{\hspace{0.25em}$0.25$E$-5$}} 
				&1&$1.43$E$-10$&3.12&$3.07$E$-11$&4.00\\
				&2&$1.02$E$-10$&3.03&$2.17$E$-11$&4.00\\
				&4&$1.01$E$-10$&3.03&$2.17$E$-11$&4.00\\
				&8&$1.01$E$-10$&3.03&$2.17$E$-11$&4.00\\\hline
				\multirow{4}*{\makecell[c]{$0.125$E$-5$}} 
				&1&$1.78$E$-10$&3.00&$1.92$E$-12$&3.99\\
				&2&$1.27$E$-10$&3.01&$1.36$E$-12$&4.00\\
				&4&$1.26$E$-10$&3.00&$1.35$E$-13$&3.99\\
				&8&$1.26$E$-10$&3.00&$1.35$E$-13$&3.99\\\hline
		\end{tabular}}\label{tab:2}
\end{center}\end{table}

Table 2 demonstrates the accuracy and convergence of the three-dimensional PD model using the MTS scheme. One notable finding is that the order of the MTS scheme based on the RK method remains consistent with the order of the UPD scheme, regardless of the coarse and fine time step sizes. Furthermore, we observed that as the ratio of the coarse to fine mesh size increases, the displacement error decreases. For example, when $\Delta t = 0.5\times 10^{-5}\hspace{0.5em}\mathrm{s}$, the error decreases from $1.14\times 10^{-8}$ to $8.06\times 10^{-9}$ as the refinement level $K$ is increased from $1$ to $8$. This highlights the accuracy benefits of employing the MTS scheme.

\subsection{Plate with a pre-existing crack}

In this example, we consider a 2D plate with a length of $L = 0.05 \mathrm{~m}$ and a width of $W = 0.05 \mathrm{~m}$, which contains a pre-existing crack with a length of $2c = 0.01 \mathrm{~m}$, as shown in Fig. \ref{Fig:5} (a). The material properties, including density $\rho$, horizon size $\delta$, elastic modulus $E$, and Poisson's ratio $\mu$, are set to be consistent with Section 4.1. Velocity constraints are applied to $\mathcal{D}_c^1$ and $\mathcal{D}_c^2$ at the top and bottom of region $\mathcal{D}$, respectively. These constraints have a depth of $\delta$, as follows:
\begin{equation}
\begin{split}
\dot{u}_y(\mathbf{x}_i,t)&=\hspace{0.75em}20.0 \mathrm{~m} /\mathrm{s} \quad \mathbf{x}_i\in \mathcal{D}_c^1,\\
\dot{u}_y(\mathbf{x}_i,t)&=-20.0 \mathrm{~m} /\mathrm{s} \quad \mathbf{x}_i\in \mathcal{D}_c^2.
\end{split}   
\end{equation}

\begin{figure}[!ht]
	\centering  
	\subfigure[]  	
	{
		\begin{minipage}{6cm}
			\centering          
			\includegraphics[scale=0.28]{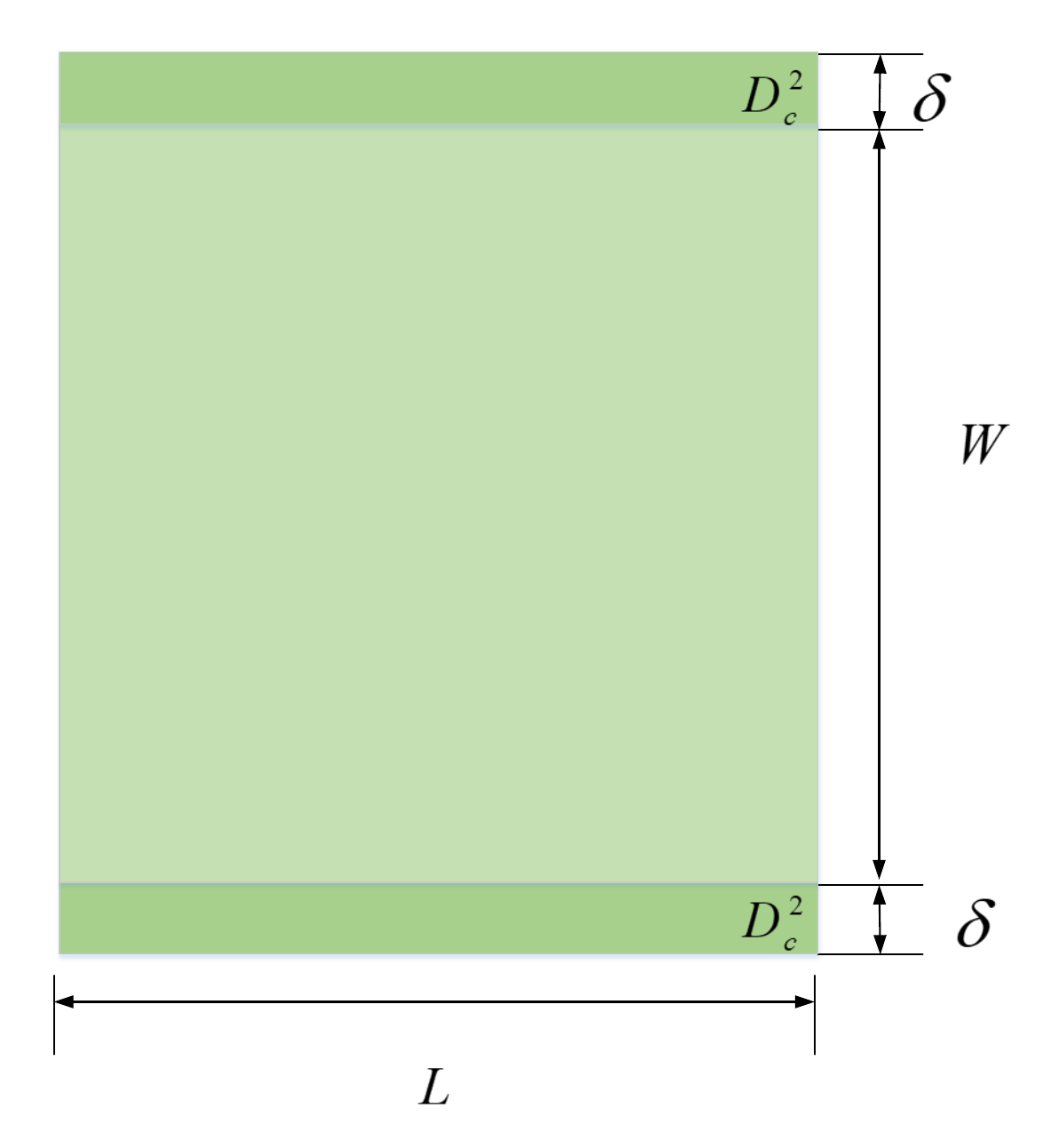}  
		\end{minipage}
	}
	\subfigure[]	
	{
		\begin{minipage}{6cm}
			\centering      
			\includegraphics[scale=0.28]{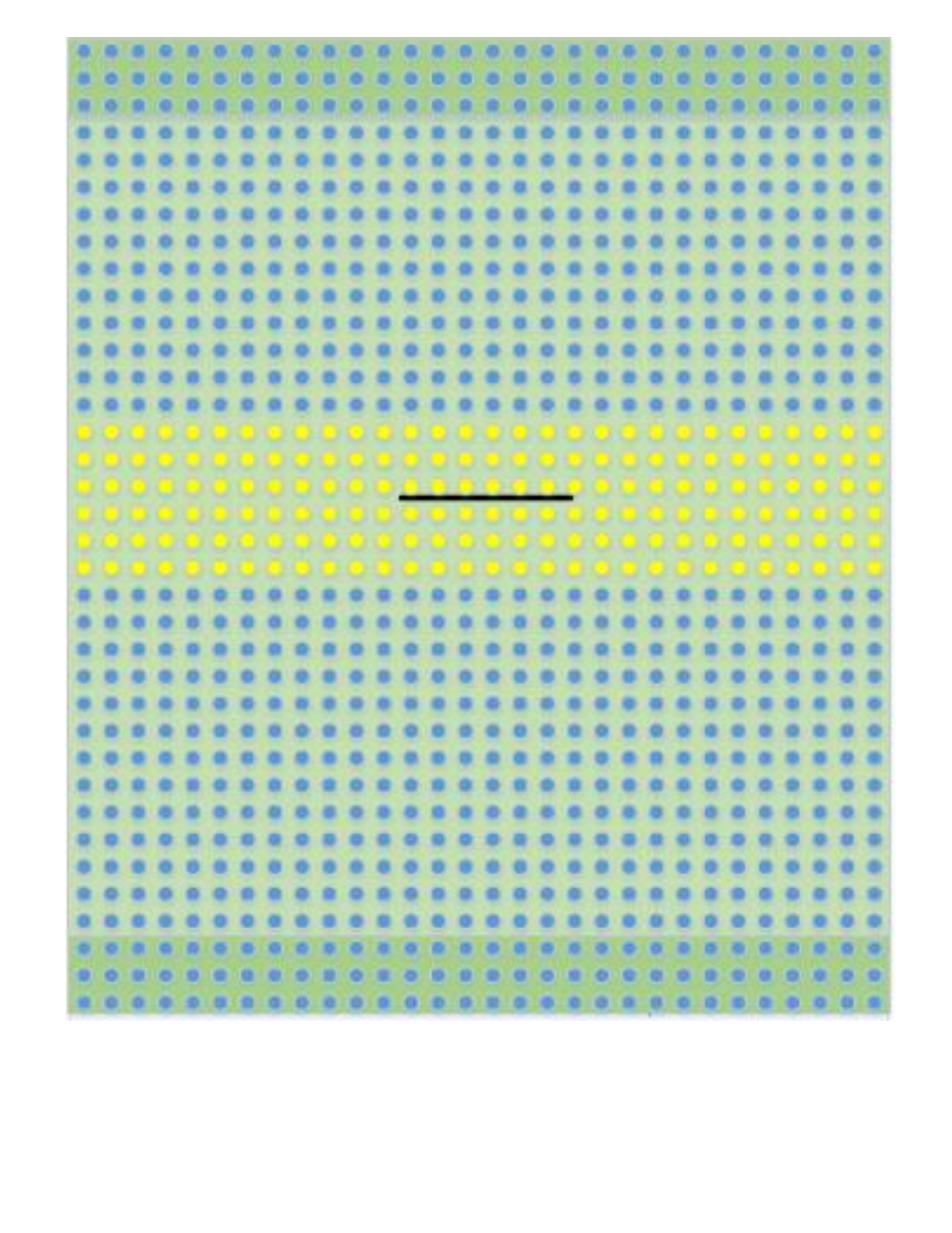}   
		\end{minipage}
	}
	
	\caption{Geometry of a 2D plate with a pre-existing crack .} 
	\label{Fig:5}
\end{figure}
For the discretization of the model, a mesh with a size of $500 \times 500$ is used. The history-dependent scalar function $\mu$ is used to identify fracture in the peridynamic simulation. The function is multiplied by equation (\ref{pd:e1}) to take into account the effects of previous deformation history on the material response and can be defined as follows:
\begin{equation}\label{mu2}
\mu(s,t)= \begin{cases} $1$, &\text{if the bond is not broken $s\textless s_0$}\\ $0$, & \text {if the bond is broken $s \ge s_0$}\end{cases}, 
\end{equation}
in which $s_0$ is the critical bond stretch and chosen as $0.01$, $s$ is the bond stretchs:
\begin{equation}\label{fra:s1}
s=\dfrac{|\bm{\eta}+\bm{\xi}|-|\bm{\xi}|}{|\bm{\xi}|}.
\end{equation}

In Fig. \ref{Fig:6}, the outcomes of the simulation are presented. The blue shades represent material points where the bonds remain intact despite the displacement changes, corresponding to $s=1$. On the other hand, the yellow shades indicate regions where the bonds have been broken, resulting in $s=0$. In certain areas where the material points have undergone significant stretching, the value of $s$ is between 0 and 1, indicating partial bond breakage. These results demonstrate that the MTS scheme approach accurately captures the displacement changes and fracture behavior, similar to the UPD scheme. The ability of the MTS scheme to handle complex deformation and fracture phenomena makes it a valuable tool in peridynamic simulations.

The MTS scheme possesses a noteworthy advantage, namely its capability to attain computational accuracy on par with the undecomposed PD solution while reducing computational time. This advantageous characteristic is demonstrated in Fig. 7. For a selected number of time steps, specifically 1500 with a time increment of $\Delta t=0.5\times10^{-5}\hspace{0.5em}\mathrm{s}$, the MTS scheme accomplishes the crack simulation in a mere 80 minutes of CPU time, compared to the 121 minutes required by the undecomposed PD scheme. The time saved in computation is directly linked to the extent of the fine time step region. Notably, the MTS scheme exhibits enhanced computational efficiency as the fine time step region decreases in size.

\begin{figure}[!ht]
	\centering	
	{\hspace{-1cm}
		\begin{minipage}{7cm}
			\centering          
			\includegraphics[scale=0.6]{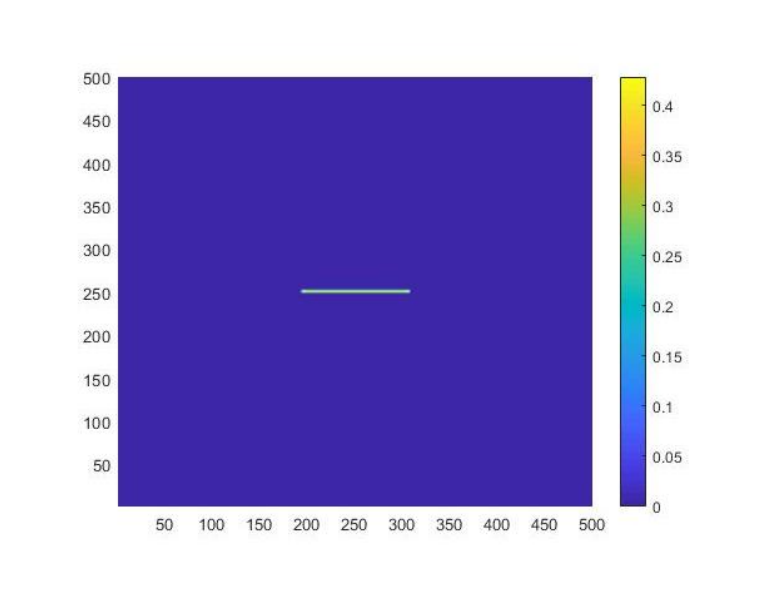}  
		\end{minipage}
	}	
	{
		\begin{minipage}{7cm}
			\centering      
			\includegraphics[scale=0.6]{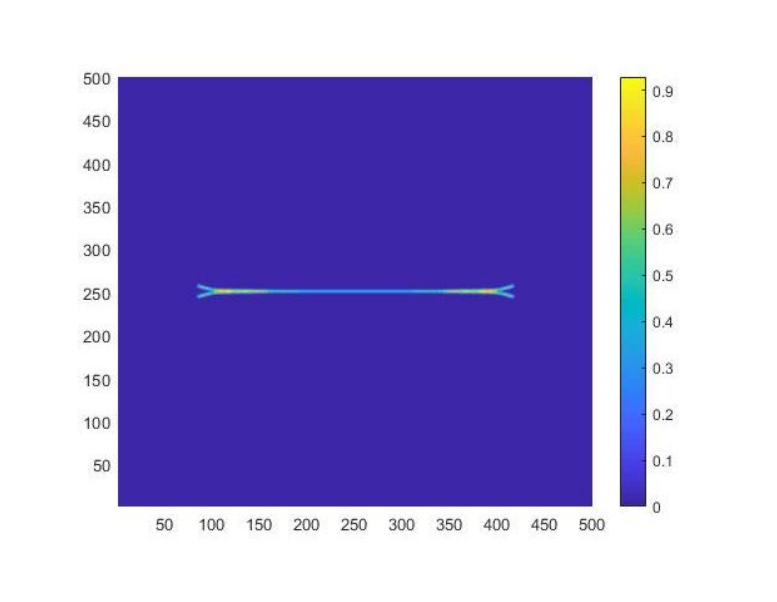}   
		\end{minipage}
	}
	{	\begin{minipage}{7cm}
			\centering      
			\includegraphics[scale=0.6]{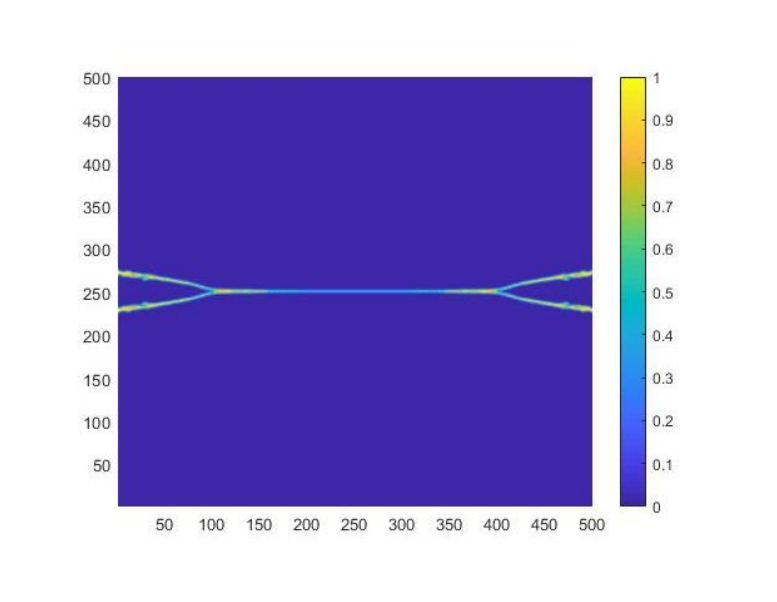}   
		\end{minipage}
	}	
	\caption{Crack propagation simulation by MTS scheme when the number of time step is chosen as $500$, $1000$ and $1500$ for $\Delta t=0.5\times10^{-5}\hspace{0.5em}\mathrm{s}$.} 
	\label{Fig:6}
\end{figure}

\begin{figure}[!ht]
	\centering   	
	{
		\begin{minipage}{7cm}
			\centering          
			\includegraphics[scale=0.48]{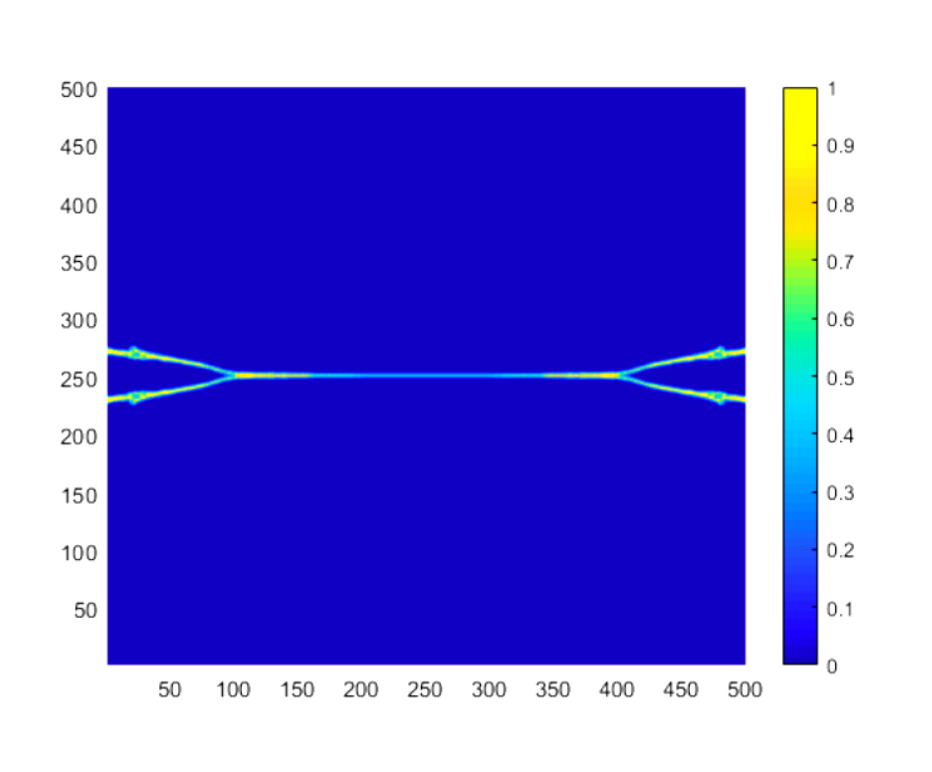}  
		\end{minipage}
	}	
	{
		\begin{minipage}{7cm}
			\centering      
			\includegraphics[scale=0.48]{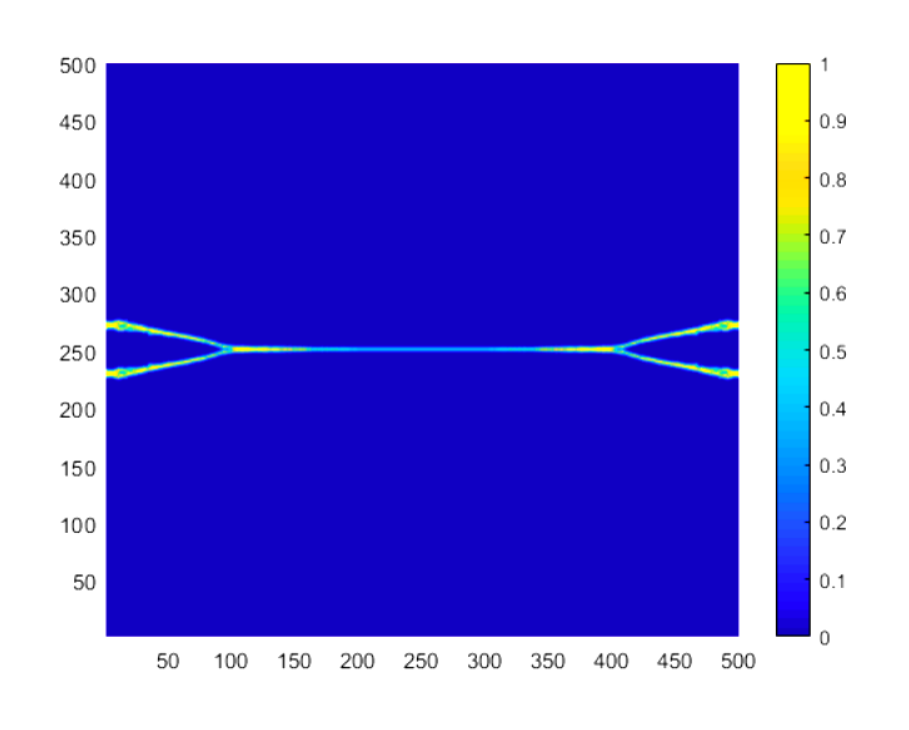}   
		\end{minipage}
	}	
	\caption{Crack propagation simulation by MTS scheme ($K$=2 and CPU time =80m) and undecomposed PD scheme ($K$=1 and CPU time =121m) when the time step is chosen as $1500$ for $\Delta t=0.5\times10^{-5}\hspace{0.5em}\mathrm{s}$}  
	\label{Fig:7}
\end{figure}

Table \ref{tab:3} provides a summary of the relative error, compared to the UPD solution, for the peridynamics approach with a time step size of $\Delta t = 0.125 \times 10^{-5}\hspace{0.5em}\mathrm{s}$. The reference displacement for error calculation is taken at $\Delta t= 1.0 \times 10^{-5}\hspace{0.5em}\mathrm{s}$. The table includes $e_c$, $e_f$, and $e$ as the errors for the coarse grid region, fine time step region, and the overall region, respectively.
The comparison between the UPD  and the MTS schemes for $K$ from $1$  to $8$. It should be noted that decreasing the time step size leads to improved accuracy in the calculations, as expected.

\begin{table}[!ht]\begin{center}
		\caption{The error of different subdomain obtain by the MTS scheme based on  RK4 with different $\Delta t$ , the number of time steps $nt$, and different  refinement level $K$ for a 2D crack model; $e_c$ is the errors for the coarse grid region , $e_f$ is the error for the fine grid region, and $e$ is the error for the overall region}
		\vspace{0.15in}
		\setlength{\tabcolsep}{2mm}{
			\begin{tabular}{|c|c|c|l|l|l|c|c|c|} \hline
				 	\multirow{2}*{$\Delta t$ (s)}&\multirow{2}*{$K$} &\multirow{2}*{}&\multicolumn{3}{c|}{$nt$}\\\cline{3-6}&&&$500$&$1000$&$1500$\\ \hline				
				\multirow{3}*{\makecell[c]{\hspace{0.75em}$1.0$E$-5$}}&\multirow{3}*{\makecell[c]{$8$}}&$e_f$&$5.43$E$-9$&$2.28$E$-7$&$2.88$E$-6$\\ 
				&&$e_c$&$8.68$E$-9$&$1.09$E$-7$&$4.91$E$-7$\\
				&&$e$&$1.03$E$-8$&$2.56$E$-7$&$2.92$E$-6$\\ \hline
				\multirow{3}*{\makecell[c]{\hspace{0.75em}$0.5$E$-5$}}&\multirow{3}*{\makecell[c]{$4$}}&$e_f$&$2.57$E$-9$&$1.38$E$-7$&$1.30$E$-6$\\ 
				&&$e_c$&$3.72$E$-9$&$5.05$E$-8$&$1.81$E$-7$\\
				&&$e$&$4.56$E$-8$&$1.48$E$-7$&$3.32$E$-6$\\ \hline
				\multirow{3}*{\makecell[c]{\hspace{0.25em}$0.25$E$-5$}}&\multirow{3}*{$2$}&$e_f$&$1.07$E$-9$&$8.13$E$-8$&$7.40$E$-7$\\ 
				&&$e_c$&$1.24$E$-9$&$2.20$E$-8$&$1.49$E$-7$\\
				&&$e$&$1.65$E$-9$&$8.46$E$-8$&$7.56$E$-7$\\ \hline
				\multirow{3}*{\makecell[c]{$0.125$E$-5$}}&\multirow{3}*{$1$}&$e_f$&$\hspace{2em}0$&$\hspace{2em}0$&$\hspace{2em}0$\\ 
				&&$e_c$&$\hspace{2em}0$&$\hspace{2em}0$&$\hspace{2em}0$\\
				&&$e$&$\hspace{2em}0$&$\hspace{2em}0$&$\hspace{2em}0$\\ \hline
		\end{tabular}}\label{tab:3}
\end{center}\end{table}

The MTS scheme demonstrates high accuracy in both coarse and fine time step sizes, with improved accuracy observed in the $\overline{\Omega}_C$ subdomain when comparing $K=1$ and $K=8$. In the fine time step region $\overline{\Omega}_F$, the MTS scheme outperforms the UPD scheme. Additionally, the MTS scheme can significantly speed up computational processes (up to four times in some examples) compared to the UPD scheme due to its flexible time step size divisions. Various examples have shown that the MTS scheme maintains high accuracy while efficiently solving problems of different sizes and time-step ratios. Hence, it is recommended for peridynamics simulations, particularly for large-scale problems.

\section{Conclusions}

The main contribution of this article is the introduction of a high-order MTS scheme for peridynamics. This scheme addresses the challenge of achieving computational accuracy by utilizing smaller spatial grids and time steps in discontinuous regions and larger spatial grids and time steps in other regions, thus overcoming the limitation of using a single time step for the entire simulation. The use of a higher-order Runge-Kutta method further enhances the accuracy and efficiency of the MTS scheme.
Theoretical analysis and numerical examples have been provided to validate the stability, accuracy, and computational advantages of the proposed method. Through a Taylor expansion analysis, it has been demonstrated that the MTS scheme maintains the order of the higher-order Runge-Kutta method. Several examples in both two-dimensional and three-dimensional settings have been employed to verify that the MTS schemes and the UPD scheme exhibit consistent convergence rates and achieve high accuracy at different time steps.

\section*{Acknowledgements}
H. Tian was supported by the Fundamental Research Funds for the Central Universities (202042008), the National Natural Science Foundation of China (11801533, 11971482). W.-S. Don would like to acknowledge the funding support of this research by the Shandong Provincial Natural Science Foundation (ZR2022MA012).
%\section*{References}
\bibliographystyle{elsarticle-num}
\bibliography{refer}
\end{document}